\def\thtext#1{
  \catcode`@=11
  \gdef\@thmcountersep{. #1}
  \catcode`@=12
}
\def\threst{
  \catcode`@=11
  \gdef\@thmcountersep{.}
  \catcode`@=12
}
\theoremstyle{plain}
\newtheorem{thm}{Theorem}
\newtheorem{prop}{Proposition}[section]
\newtheorem{cor}[prop]{Corollary}
\newtheorem{ass}[prop]{Assertion}
\theoremstyle{definition}
\newtheorem{dfn}[prop]{Definition}
\newtheorem{des}[prop]{Designation}
 \def\.{.\spacefactor\@m}
\newcommand{\D}{\Delta}
\newcommand{\e}{\varepsilon}
\newcommand{\g}{\gamma}
\newcommand{\cM}{\mathcal{M}}
\renewcommand{\r}{\rho}
\newcommand{\s}{\sigma}
\newcommand{\R}{\mathbb{R}}
\newcommand{\rom}[1]{{\em #1}}
\renewcommand{\(}{\rom(}
\renewcommand{\)}{\rom)}
\renewcommand{\:}{\colon}
\newcommand{\0}{\emptyset}
\renewcommand{\c}{\circ}
\newcommand{\oPi}{\stackrel{\raise-2pt\hbox{$\c$}}\Pi}
\newcommand{\oW}{\stackrel{\raise-2pt\hbox{$\c$}}W}
\newcommand{\x}{\times}
\newcommand{\diam}{{\operatorname{diam}}}
\renewcommand{\min}{{\operatorname{min}}}
\def\dis{\operatorname{dis}}
\def\diam{\operatorname{diam}}
\def\:{\colon}
\def\opt{{\operatorname{opt}}}
\title{Convexity of Balls in Gromov--Hausdorff Space}
\date{}							
\author{Daria P.~Klibus}
\begin{document}
\maketitle
\begin{abstract}
In this paper we study the space $\mathcal{M}$ of all nonempty compact metric spaces considered up to isometry, equipped with the Gromov--Hausdorff distance. We show that each ball in $\mathcal{M}$ with center at the one-point space is convex in the weak sense, i.e., every two points of such a ball can be joined by a shortest curve that belongs to this ball; however, such a ball is not convex in the strong sense: it is not true that every shortest curve joining the points of the ball belongs to this ball. We also show that a ball of sufficiently small radius with center at a space of general position is convex in the weak sense.
\end{abstract}

\section*{Introduction}
\markright{\thesection.~Introduction}

The Gromov--Hausdorff distance was defined in 1975 by D. Edwards in article "The Structure of Superspace" \cite{Edw}, then in 1981 it was rediscovered by M.~ Gromov \cite{Gro}.

We will investigate the geometry of the space $\mathcal{M} $ of all nonempty compact metric spaces (considered up to isometry) with the Gromov--Hausdorff distance. It is well-known that the Gromov--Hausdorff distance is a metric on $\mathcal{M}$ \cite{BurBurIva}. The Gromov--Hausdorff space is Polish (complete separable) and  path-connected. Also A.O. Ivanov, N.K. Nikolaeva and A.A. Tuzhilin showed that the Gromov--Hausdorff metric is strictly intrinsic \cite{IvaNikTuz}.

The present paper is devoted to the following question: are balls in the Gromov--Hausdorff space convex. There are two concepts: convexity in the weak sense (every two points of such a ball can be joined by a shortest curve that belongs to this ball) and strong sense (each shortest curve connecting any pair of points of the set, belongs to this set). We show that a ball of nonzero radius with center at the one-point space is convex in the weak sense, but not convex in the strong one. We also show that a ball of sufficiently small radius with center at a generic position space is convex in the weak sense.

I am grateful to my scientific adviser professor Alexey A.~Tuzhilin and to professor Alexander O.~Ivanov for stating the problem and regular attention to my work.

The work was supported by the Russian Foundation for Basic Research (grant No. 16-01-00378-a) and the program ``Leading Scienti c Schools'' (grant no. NSh-6399.2018.1).

\section{Preliminaries}
\markright{\thesection.~Preliminaries}

Let $X$ be an arbitrary metric space. By $|xy|$ we denote the distance between it is two points $x$ and $y$. For any point $x \in X$ and a real number $r > 0$ we denote by $U_\e(x)= \{y \in X: |xy| < \e\}$ the \emph{open ball of radius $\varepsilon$ centered at $x$}; for each nonempty $A \subset X$ and a real number $r > 0$ we put $U_\e(A) = \cup_{a\in A} U_\e(a)$ an call it \emph{open $r$-neighborhood of the set $A$}. The \emph{closed ball of radius $\e$ centered at $x$}  is $B_\e(x) = \{y \in X: |xy| \le \e\}$. For $x \in X$ and nonempty $A \subset X$ we put $|xA| = \inf\{|xa| : a \in A\}$. For nonempty $A \subset X$ and non-negative $r$ (possibly equal $\infty$), the \emph{closed $r$-neighborhood of the set $A$} is $B_r(A) = \{x \in X : |xA|\le r\}$.

\begin{dfn}
Let $X$ and $Y$ be two nonempty subsets of a metric space. The \emph{Hausdorff distance between $X$ and $Y$} is $\text{$d$}_{H}(X, Y)  = \inf\left\{\text{$\e>0 \mid ($U$_\e(X) \supset Y) \  \& \  ($U$_\e(Y) \supset X)$}\right\}.$
\end{dfn}

By $\mathcal{H}(X)$ we denote the family of all nonempty closed bounded subsets of a metric space $X$.

\begin{prop}[\cite{BurBurIva}]
The function $d_{H}$ is a metric on $\mathcal{H}(X)$.
\end{prop}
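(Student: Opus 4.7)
The plan is to verify the four metric axioms for $d_H$ on $\mathcal{H}(X)$: nonnegativity and finiteness, symmetry, identity of indiscernibles, and the triangle inequality.

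First I would address finiteness. Since $A, B \in \mathcal{H}(X)$ are nonempty, closed, and bounded, fix $a_0 \in A$, $b_0 \in B$, and let $R$ be an upper bound on the diameters of $A$ and $B$. Then for any $a \in A$, $|a b_0| \le |a a_0| + |a_0 b_0| \le R + |a_0 b_0|$, so $A \ss U_\e(B)$ and $B \ss U_\e(A)$ for $\e$ larger than $R + |a_0 b_0|$. Hence the set in the definition of $d_H(A,B)$ is nonempty, so the infimum is a finite nonnegative number. Symmetry is immediate from the symmetric form of the definition.

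Next I would prove the identity of indiscernibles. Clearly $d_H(A,A)=0$. Conversely, suppose $d_H(A,B)=0$. Then for every $\e>0$ we have $A \ss U_\e(B)$, so each $a \in A$ satisfies $|aB|<\e$; letting $\e\to 0$ gives $|aB|=0$, so $a$ lies in the closure of $B$. Since $B$ is closed, $a \in B$, hence $A \ss B$. The symmetric argument gives $B \ss A$, so $A=B$. The role of closedness here is essential.

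Finally I would handle the triangle inequality, which is the only step with real content. The key observation is the neighborhood inclusion: if $Y \ss U_s(Z)$ then $U_r(Y) \ss U_{r+s}(Z)$, since for $y \in Y$ one has $|yz|<s$ for some $z \in Z$, so any $x$ with $|xy|<r$ satisfies $|xz|<r+s$. Given $A,B,C \in \mathcal{H}(X)$, pick arbitrary $\e_1 > d_H(A,B)$ and $\e_2 > d_H(B,C)$. Then $A \ss U_{\e_1}(B)$ and $B \ss U_{\e_2}(C)$, so by the above $A \ss U_{\e_1+\e_2}(C)$. Symmetrically $C \ss U_{\e_1+\e_2}(A)$. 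Therefore $d_H(A,C) \le \e_1+\e_2$, and taking infimum over admissible $\e_1,\e_2$ yields $d_H(A,C) \le d_H(A,B)+d_H(B,C)$.

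I do not expect significant obstacles: the only subtle points are remembering to use closedness in the identity of indiscernibles and isolating the neighborhood inclusion lemma for the triangle inequality; everything else is bookkeeping from the definition of $U_\e(\cdot)$.
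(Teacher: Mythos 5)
Your proof is correct: the paper states this proposition as a cited result from Burago--Burago--Ivanov without giving a proof, and your argument is the standard one, with the two genuinely necessary ingredients (closedness for the identity of indiscernibles, and the neighborhood inclusion $Y \ss U_s(Z) \Rightarrow U_r(Y) \ss U_{r+s}(Z)$ for the triangle inequality) both present and handled correctly.
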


\begin{prop}[\cite{IvaTu}]
 For any metric space $X$, any $A \in \mathcal{H}(X)$, and any nonnegative $r$ we have $B_r(A) \in \mathcal{H}(X)$.
\end{prop}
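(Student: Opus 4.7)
My plan is to verify directly the three defining properties of membership in $\mathcal{H}(X)$: nonemptiness, closedness, and boundedness of $B_r(A)$. For finite $r$ this is routine; the case $r=\infty$ reduces to asking whether $X$ itself is bounded, so in practice the statement is applied with $r$ finite and I will write the proof under that assumption (with the evident remark that if $r=\infty$ and $X$ is bounded, $B_r(A)=X$).

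Nonemptiness is immediate: every $a\in A$ satisfies $|aA|=0\le r$, so $A\subset B_r(A)$ and the latter inherits nonemptiness from $A$.

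For closedness, the key tool is that the distance-to-$A$ function $f\:X\to\R$, $f(x)=|xA|$, is $1$-Lipschitz (this follows from the triangle inequality $|xA|\le |xy|+|yA|$, applied symmetrically). Hence $f$ is continuous, and $B_r(A)=f^{-1}\bigl([0,r]\bigr)$ is the preimage of a closed set, therefore closed.

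For boundedness, let $d=\diam(A)$, which is finite since $A\in\cH(X)$. Given $x,y\in B_r(A)$, for any $\e>0$ choose $a_x,a_y\in A$ with $|xa_x|<|xA|+\e\le r+\e$ and $|ya_y|<|yA|+\e\le r+\e$. Then
\[
|xy|\le |xa_x|+|a_xa_y|+|a_yy|\le (r+\e)+d+(r+\e)=2r+d+2\e,
\]
and letting $\e\to 0$ gives $\diam B_r(A)\le 2r+d<\infty$. Combining the three items concludes the proof. I do not anticipate any real obstacle here — the only point worth being careful about is making the continuity of $x\mapsto|xA|$ explicit, since this is what converts the bound $|xA|\le r$ into closedness of the sublevel set.
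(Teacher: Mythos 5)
Your proof is correct. The paper itself states this proposition as a citation from the literature and gives no proof, so there is nothing to compare against; your three-part verification (nonemptiness via $A\subset B_r(A)$, closedness via continuity of the $1$-Lipschitz function $x\mapsto|xA|$, and boundedness via $\diam B_r(A)\le 2r+\diam A$) is the standard argument, and your remark restricting to finite $r$ is the right reading of the statement.
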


\begin{dfn}[\cite{IvaTu}]
Let $W$ be any metric space, $a,b \in W$, $|ab| = r$, $s \in [0,r]$. A point $c \in W$ is \emph{in $s$-position between $a$ and $b$}, if $|ac| = s$ and $|cb| = r - s $.
\end{dfn}

\begin{prop}[\cite{IvaTu}]\label{2}
Let $X$ be any metric space and $A,B \in \mathcal{H}(X)$, $r = d_H(A,B), s \in [0,r]$. If a set $C \in \mathcal{H}(X)$ is in $s$-position between $A$ and $B$, then $C \subset B_s(A) \cap B_{r-s}(B)$.
\end{prop}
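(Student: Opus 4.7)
The plan is to reduce the claim to a direct unpacking of the Hausdorff-distance definition in the metric space $\bigl(\mathcal{H}(X), d_H\bigr)$. Since $C$ is in $s$-position between $A$ and $B$, the defining equalities give $d_H(A,C) = s$ and $d_H(C,B) = r - s$; the goal is to convert each of these into a pointwise containment, namely $C \subset B_s(A)$ and $C \subset B_{r-s}(B)$, after which intersecting yields the proposition.

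For the first containment I would argue as follows. By the definition of Hausdorff distance as an infimum over $\e$ such that $U_\e(A) \supset C$ and $U_\e(C) \supset A$, the equality $d_H(A,C) = s$ forces $U_\e(A) \supset C$ for every $\e > s$. Fixing any $c \in C$ then yields $|cA| < \e$ for every $\e > s$, hence $|cA| \le s$, i.e., $c \in B_s(A)$. This proves $C \subset B_s(A)$. The symmetric argument with the pair $(C,B)$ in place of $(A,C)$ and with $r - s$ in place of $s$ gives $C \subset B_{r-s}(B)$, and intersecting the two inclusions completes the proof.

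I do not anticipate any serious obstacle: the statement is essentially a translation between the extrinsic formulation of Hausdorff distance (via $\e$-neighborhoods of the whole sets) and the intrinsic formulation (via the distance $|cA|$ from a point to a set). The only subtlety is passing from the strict bound $|cA| < \e$ valid for every $\e > s$ to the non-strict bound $|cA| \le s$, which is immediate by taking the infimum. The extreme cases cause no trouble: if $s = 0$, then $d_H(A,C) = 0$ together with closedness of $A, C \in \mathcal{H}(X)$ forces $A = C$, so $C = A \subset B_0(A)$, and the case $s = r$ is symmetric.
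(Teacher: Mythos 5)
Your argument is correct: the key observation that the set of admissible $\e$ in the definition of $d_H$ is upward-closed, so $d_H(A,C)=s$ yields $U_\e(A)\supset C$ for all $\e>s$ and hence $|cA|\le s$ for every $c\in C$, is exactly the right unpacking, and the symmetric step for $B$ plus intersection finishes the proof. The paper itself states this proposition as a cited result from the literature and gives no proof, so there is nothing to compare against; your reasoning (including the harmless, strictly unnecessary treatment of the endpoints $s=0$ and $s=r$) is the standard and complete argument.
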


\begin{des}[\cite{IvaTu}]\label{4}
In what follows, we denote the set $B_s(A)\cap B_{r-s}(B)$  by $C_s(A,B)$.
\end{des}

\begin{dfn}
Let $X$ and $Y$ be metric spaces. A triple $(X',Y',Z)$ that consists of a metric space $Z$ and it is subsets $X'$ and $Y'$ isometric to $X$ and $Y$, respectively, is called a \emph{realization of the pair $(X,Y)$}. \emph{The Gromov--Hausdorff distance $d_{GH}(X,Y)$ between $X$ and $Y$} is the infimum of real numbers $\r$ such that there exists a realization $(X',Y',Z)$ of the pair $(X,Y)$ with $d_{H}$$(X',Y') \le \r$.
\end{dfn}

By $\mathcal{M}$ we denote the set of all compact metric spaces, considered up to an isometry, with the Gromov--Hausdorff distance. The restriction of $d_{GH}(X,Y)$ onto $\mathcal{M}$ is a metric \cite{BurBurIva}.

A metric on a set $X$ is \emph{stricly intrinsic}, if any two points $x$, $y \in X$ are joined by a curve whose length is equal to the distance between $x$ and $y$ (this curve is called \emph{shortest}).

\begin{thm}[\cite{IvaTu}]
Let $X$ be a complete locally compact space with intrinsic metric. Then for any $A, B \in \mathcal{H}(X)$, $r = d_H(A,B), s \in [0,r]$, the set $C_s(A,B)$ belongs $\mathcal{H}(X)$ and is in $s$-position between $A$ and $B$.
\end{thm}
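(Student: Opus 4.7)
The plan is to apply the Hopf--Rinow--Cohn-Vossen theorem, which under the stated hypotheses on $X$ makes $X$ proper (closed bounded sets are compact) and guarantees that every two points are joined by a shortest curve. With this in hand, one constructs $s$-position witnesses by walking along geodesics from points of $A$ to nearest points of $B$, and symmetrically.

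First I would handle the easy structural facts. The function $x \mapsto |xA|$ is $1$-Lipschitz, so $B_s(A) = \{x : |xA| \le s\}$ and $B_{r-s}(B)$ are closed, whence $C_s(A,B)$ is closed. Since $C_s(A,B) \subset B_s(A)$ and $A$ is bounded, $C_s(A,B)$ is bounded. Nonemptiness will fall out of the next step.

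Next I would estimate $d_H(A, C_s(A,B))$. Fix $a \in A$. Because $B$ is closed, bounded, and $X$ is proper, $B$ is compact, so there exists $b \in B$ realizing $|ab| = |aB|$; note $|aB| \le r$ since $A \subset B_r(B)$. Choose a shortest curve $\g\:[0, |ab|] \to X$ from $a$ to $b$ parametrized by arclength, set $t = \min(s, |ab|)$, and put $c = \g(t)$. Then $|cA| \le t \le s$, so $c \in B_s(A)$. Also $|cB| \le |cb| = |ab| - t$, which equals $|ab| - s \le r - s$ when $|ab| \ge s$ and equals $0$ otherwise; in either case $c \in B_{r-s}(B)$. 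Hence $c \in C_s(A,B)$, and since $|ac| \le s$ we obtain both nonemptiness of $C_s(A,B)$ and $A \subset B_s(C_s(A,B))$.

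The argument is symmetric under $A \leftrightarrow B$, $s \leftrightarrow r-s$, yielding $B \subset B_{r-s}(C_s(A,B))$. Combined with the trivial inclusions $C_s(A,B) \subset B_s(A)$ and $C_s(A,B) \subset B_{r-s}(B)$, this gives $d_H(A, C_s(A,B)) \le s$ and $d_H(B, C_s(A,B)) \le r - s$. Finally, the triangle inequality for $d_H$ on $\mathcal{H}(X)$ forces
\[ r = d_H(A,B) \le d_H(A, C_s(A,B)) + d_H(C_s(A,B), B) \le s + (r - s) = r, \]
so both intermediate inequalities are equalities, which is precisely the $s$-position statement. The main obstacle is the construction in the third paragraph: one must simultaneously obtain a nearest point $b \in B$ to $a$ and a shortest curve from $a$ to $b$, and this is exactly where all three hypotheses on $X$ --- completeness, local compactness, and intrinsic metric --- enter, packaged through Hopf--Rinow--Cohn-Vossen.
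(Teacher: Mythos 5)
Your proposal is correct; note that the paper itself gives no proof of this statement, quoting it from \cite{IvaTu}, and your argument --- deducing properness and the existence of shortest curves from Hopf--Rinow--Cohn-Vossen, producing a point of $C_s(A,B)$ within distance $s$ of each $a\in A$ by sliding along a geodesic to a nearest point of $B$ (and symmetrically), then squeezing $d_H(A,C_s(A,B))=s$ and $d_H(C_s(A,B),B)=r-s$ out of the triangle inequality --- is exactly the standard argument of that reference. All the edge cases ($|aB|<s$, $s=0$, $r=0$) are handled correctly, so there is nothing to add.
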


\begin{cor}[\cite{BurBurIva}]
Let $X$ be a complete locally compact space with intrinsic metric \($X$ is boundedly compact with stricly intrinsic metric $[1]$\/\). Then $\mathcal{H}(X)$ is boundedly compact, and Hausdorff metric is stricly intrinsic.
\end{cor}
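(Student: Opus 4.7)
The corollary has two parts: bounded compactness of $\mathcal{H}(X)$ and strict intrinsicity of the Hausdorff metric. I would handle them independently, using the preceding theorem only for the second part.

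For bounded compactness, I would prove a Blaschke-type selection theorem: any sequence $(A_n) \subset \mathcal{H}(X)$ contained in a bounded Hausdorff-ball admits a convergent subsequence. First, note that Hausdorff-boundedness of the sequence together with boundedness of any single $A_0$ places all $A_n$ inside a common closed metric ball $B_R(x_0) \subset X$, which is compact by bounded compactness of $X$. Fix a sequence $\e_k \to 0$ and finite $\e_k$-nets $N_k \subset B_R(x_0)$. For each $n$, the set $S_n^k = \{p \in N_k : |p A_n| \le \e_k\}$ lies in the finite power set $2^{N_k}$; by a diagonal argument I can pass to a subsequence (still called $A_n$) on which $S_n^k$ stabilizes in $n$ for every $k$. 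A routine estimate then shows that $(A_n)$ is Cauchy in $d_H$, and by completeness of $\mathcal{H}(X)$ (which follows from completeness of $X$ by a standard argument) it converges in $\mathcal{H}(X)$.

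For strict intrinsicity, I would use the preceding theorem directly. Given $A,B \in \mathcal{H}(X)$ with $r = d_H(A,B)$, define $\gamma\colon [0,r] \to \mathcal{H}(X)$ by $\gamma(s) = C_s(A,B)$. The theorem guarantees $\gamma(s) \in \mathcal{H}(X)$ and that $\gamma(s)$ is in $s$-position between $A$ and $B$, so in particular $d_H(A,\gamma(s)) = s$ and $d_H(\gamma(s),B) = r - s$. To conclude that $\gamma$ is a shortest curve of length $r$, it suffices to show $d_H(\gamma(s),\gamma(s')) = s' - s$ for all $0 \le s \le s' \le r$. The lower bound is the triangle inequality: $s' = d_H(A,\gamma(s')) \le d_H(A,\gamma(s)) + d_H(\gamma(s),\gamma(s')) = s + d_H(\gamma(s),\gamma(s'))$.

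The main obstacle is the upper bound $d_H(\gamma(s),\gamma(s')) \le s' - s$, which is where the intrinsic geometry of $X$ enters essentially. I would argue as follows: pick $x \in \gamma(s') = C_{s'}(A,B)$. Then $|xA| \le s'$, so there is $a \in A$ with $|xa|$ close to $|xA|$; since $X$ is complete, locally compact, and intrinsic, shortest curves exist, so on a shortest segment from $x$ to $a$ we can select a point $y$ with $|xy| = s' - s$ and $|ya| \le s$. Then $y \in B_s(A)$ and $|yB| \le |yx| + |xB| \le (s' - s) + (r - s') = r - s$, so $y \in C_s(A,B) = \gamma(s)$. Thus $\gamma(s') \subset B_{s'-s}(\gamma(s))$, and the symmetric inclusion (interchanging the roles of $A$ and $B$ and replacing $s$ by $r-s$) gives $\gamma(s) \subset B_{s'-s}(\gamma(s'))$. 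Combining these yields $d_H(\gamma(s),\gamma(s')) \le s'-s$, completing the proof.
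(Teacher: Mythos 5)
The paper does not prove this corollary at all: it is imported from \cite{BurBurIva} (with the preceding theorem from \cite{IvaTu}) and used as a black box, so there is no in-paper argument to compare against. Your proposal is a sound self-contained proof and follows the standard route. The first part is the Blaschke selection theorem, and your reduction is correct: Hausdorff-boundedness traps the whole sequence in one closed ball of $X$, which is compact by the Hopf--Rinow-type equivalence granted in the hypothesis, and the stabilization of the sets $S_n^k$ gives $d_H(A_n,A_m)\le 2\e_k$ along the diagonal subsequence, whence Cauchy plus completeness of $\mathcal{H}(X)$ finishes it. The second part correctly leans on the preceding theorem exactly where it should: the $s$-position property gives the lower bound $d_H(\gamma(s),\gamma(s'))\ge s'-s$ by the triangle inequality, and the substance is the reverse inequality, for which your midpoint-on-a-geodesic construction is the right mechanism; the symmetry $C_s(A,B)=C_{r-s}(B,A)$ indeed delivers the second inclusion. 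This is essentially Corollary \ref{7} of the paper, which you thereby also reprove.

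Two small points you should tighten. First, when you pick $a\in A$ realizing $|xA|$ and slide along a shortest segment to get $y$ with $|xy|=s'-s$, this tacitly assumes $|xa|\ge s'-s$; if $|xa|<s'-s$ you must instead take $y=a$, for which $|ya|=0\le s$ and $|yB|\le |ax|+|xB|\le (s'-s)+(r-s')=r-s$ still hold, so the conclusion survives but the case needs to be mentioned. Second, attaining the infimum $|xA|=|xa|$ uses compactness of $A$ (closed and bounded in a boundedly compact space); alternatively an $\e$-approximation argument works, but say which you mean. Neither issue affects the validity of the approach.
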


\begin{cor}[\cite{IvaTu}]\label{7}
Let $X$ be a complete locally compact space with intrinsic metric, $A,B \in \mathcal{H}(X)$, and $r = d_H(A,B)$. Then $\gamma(s) = C_s(A,B)$, $s \in [a,b]$ is a shortest curve connecting $A$ and $B$, where the length of curve $\gamma$ is equal to $d_H(A,B)$, and the parameter $s$ is natural.
\end{cor}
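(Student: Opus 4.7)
The plan is to show that $\gamma\colon[0,r]\to(\mathcal{H}(X),d_H)$ is an isometric embedding, i.e.\ $d_H(\gamma(s_1),\gamma(s_2))=|s_2-s_1|$ for all $s_1,s_2\in[0,r]$. Once this is established, it follows automatically that $\gamma$ has length $r=d_H(A,B)$, which is the minimum possible length for a curve joining $A$ and $B$, so $\gamma$ is a shortest curve with natural parameter. By the theorem immediately preceding the corollary, each $\gamma(s)=C_s(A,B)$ lies in $\mathcal{H}(X)$ and is in $s$-position between $A$ and $B$, so $d_H(A,\gamma(s))=s$ and $d_H(\gamma(s),B)=r-s$; in particular $\gamma(0)=A$ and $\gamma(r)=B$, so the endpoints are correct.

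The lower bound $d_H(\gamma(s_1),\gamma(s_2))\ge|s_1-s_2|$ is immediate from the reverse triangle inequality applied to the point $A$ in $\mathcal{H}(X)$ and the values $d_H(A,\gamma(s_i))=s_i$. For the upper bound I would fix $0\le s_1<s_2\le r$, put $\Delta=s_2-s_1$, and work in $X$ directly. The hypotheses on $X$ (complete, locally compact, intrinsic) give, via Hopf--Rinow for length spaces, that $X$ is boundedly compact and geodesic; in particular $B\in\mathcal{H}(X)$ is compact. For any $x\in C_{s_1}(A,B)$, the distance $|xB|$ is then attained at some $b\in B$ with $|xb|\le r-s_1$, and there is a shortest curve $\sigma$ in $X$ from $x$ to $b$. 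Take $y$ to be the point on $\sigma$ at arclength $t:=\min(\Delta,|xb|)$ from $x$. Using $|xA|\le s_1$, $|xb|\le r-s_1$, and $|xy|=t$, $|yb|=|xb|-t$, a short check shows $|yA|\le t+s_1\le s_2$ and $|yB|\le\max(|xb|-\Delta,0)\le r-s_2$; hence $y\in C_{s_2}(A,B)$ with $|xy|\le\Delta$. The symmetric construction, sliding from $y\in C_{s_2}(A,B)$ along a shortest curve in $X$ toward a closest point of the compact set $A$ by arclength $\min(\Delta,|yA|)$, places every point of $C_{s_2}(A,B)$ within distance $\Delta$ of $C_{s_1}(A,B)$. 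Combining the two inclusions yields $d_H(\gamma(s_1),\gamma(s_2))\le\Delta$.

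The main technical pillar, and the only place where the assumptions on $X$ really enter, is the Hopf--Rinow-type theorem for length spaces: without bounded compactness (compactness of closed bounded sets) and without the existence of shortest curves in $X$, neither the minimizer $b\in B$ (resp.\ $a\in A$) nor the geodesic along which to slide is guaranteed, and the upper-bound argument collapses. Once these tools are invoked, the rest of the proof is a short algebraic verification of the two defining inequalities for $C_s(A,B)$ and an application of the reverse triangle inequality, and the natural-parameter / shortest-curve conclusion follows formally.
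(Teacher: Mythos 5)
The paper itself gives no proof of this corollary --- it is quoted from \cite{IvaTu} as a known result --- so there is nothing internal to compare against; judged on its own, your argument is correct and is the standard derivation. The lower bound $d_H(\gamma(s_1),\gamma(s_2))\ge|s_1-s_2|$ from the $s$-position property plus the reverse triangle inequality, and the upper bound by sliding along a geodesic toward a nearest point of $B$ (resp.\ $A$) by arclength $\min(\Delta,|xb|)$, together show $\gamma$ is an isometric embedding of $[0,r]$, which gives everything claimed; you correctly identify Hopf--Rinow (bounded compactness plus existence of shortest curves) as the only nontrivial input, and this is consistent with the parenthetical remark in the corollary of \cite{BurBurIva} stated just after the theorem you invoke.
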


\begin{dfn}
A nonempty subset $M$ of a metric space $X$ with stricly intrinsic metric is \emph{convex in the weak sense}, if for any two points from $M$, some shortest curve connecting them belongs to $M$.
\end{dfn}

\begin{dfn}
A nonempty subset $M$ of a metric space $X$ with stricly intrinsic metric is \emph{convex in the strong sense}, if for any two points from $M$, any shortest curve connecting them belongs to $M$.
\end{dfn}

Let $X$ and $Y$ be arbitrary nonempty sets. Recall that a \emph{relation} between the sets $X$ and $Y$ is a subset of the Cartesian product $X \times Y$. By $\mathcal{P}(X, Y)$ we denote the set of all nonempty relations between $X$ and $Y$. Let $\pi_X\: (X, Y)\to X$ and $\pi_Y\: (X, Y)\to Y$ be the canonical projections, i.e., $\pi_X\ (x, y) = x$ and $\pi_Y\ (x, y) = y$. In the same way we denote the restrictions of the canonical projections to each relation $\sigma \in \mathcal{P}(X, Y)$.

Let us consider each relation $\sigma \in \mathcal{P}(X, Y)$  as a multivalued mapping whose domain may be less than $X$. Then, similarly with the case of mappings, for any $x\in X$ and any $A\subset X$  their images $\sigma(x)$ and $\sigma(A)$ are defined, and for any $y \in Y$ and any $B \subset Y$ their preimages $\sigma ^{-1}(y)$ and $\sigma ^{-1}(B)$ are also defined.

\begin{dfn}
A relation $R \subset X \times Y$ between $X$ and $Y$ is called a \emph{correspondence}, if the restrictions of the canonical projections $\pi_X$ and $\pi_Y$ onto $R$ are surjective. By $\mathcal{R}(X, Y)$ we denote the set of all correspondences between $X$ and $Y$.
\end{dfn}

\begin{dfn}
Let $X$ and $Y$ be arbitrary metric spaces. The \emph{distortion} $\dis\sigma$ of a \emph{relation} $\sigma\in \mathcal{P}(X, Y)$ is the value
$$
 \dis\s=\sup\Bigl\{\bigl| |xx'| - |yy'| \bigr|:(x,y),\,(x',y')\in\s\Bigr\}.
$$
\end{dfn}

\begin{prop}[\cite{BurBurIva}]\label{1,7} For any metric spaces $X$ and $Y$ we have
$$
d_{GH}(X,Y) = \frac{1}{2}\inf\left\{\text{$\dis R: R\in \mathcal{R}(X, Y)$}\right\}.
$$
\end{prop}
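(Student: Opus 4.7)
The plan is to prove the identity by two separate inequalities.

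\smallskip

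\emph{Upper bound $d_{GH}(X,Y) \le \tfrac{1}{2}\dis R$ for each $R \in \mathcal{R}(X,Y)$.}
Fix a correspondence $R$ with $\dis R < \infty$ and put $r = \tfrac12 \dis R$. I would construct an explicit realization of $(X,Y)$ in $Z := X \sqcup Y$, keeping the original metrics on each summand and defining the cross-distance
\[
|xy|_Z := \inf\bigl\{|xx'| + r + |y'y| : (x',y') \in R\bigr\}, \qquad x\in X,\ y\in Y.
\]
Symmetry and non-negativity are immediate; the non-trivial verification is the triangle inequality on mixed triples, which reduces to the distortion bound $\bigl||x'_1 x'_2| - |y'_1 y'_2|\bigr| \le 2r$ applied to a pair of points $(x'_i, y'_i) \in R$ chosen to approximate the two relevant infima. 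Once $|\cdot\,\cdot|_Z$ is seen to be a pseudometric (and then a metric after checking $|xy|_Z > 0$ for $x \ne y$, using that all cross-distances are at least $r$ minus a controlled correction), surjectivity of the projections $\pi_X$ and $\pi_Y$ gives, for every $x \in X$, some $y$ with $(x,y)\in R$, hence $|xy|_Z \le r$; symmetrically on the $Y$-side. Thus $d_H(X,Y) \le r$ in this realization, yielding $d_{GH}(X,Y) \le \tfrac12 \dis R$.

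\smallskip

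\emph{Lower bound $\tfrac{1}{2}\inf\{\dis R\} \le d_{GH}(X,Y)$.}
Given $\e > 0$, pick a realization $(X', Y', Z)$ of $(X,Y)$ with $d_H(X', Y') < d_{GH}(X,Y) + \e =: r$. Since $X' \subset U_r(Y')$ and $Y' \subset U_r(X')$, for each $x \in X$ choose $f(x) \in Y$ with $|x'\,f(x)'| < r$, and for each $y \in Y$ choose $g(y) \in X$ with $|g(y)'\,y'| < r$. Set
\[
R = \bigl\{\bigl(x,f(x)\bigr) : x \in X\bigr\} \cup \bigl\{\bigl(g(y), y\bigr) : y \in Y\bigr\}.
\]
Both projections are surjective by construction, so $R \in \mathcal{R}(X,Y)$. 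For any two pairs $(x_1,y_1), (x_2,y_2) \in R$, the triangle inequality inside $Z$ applied to the four points $x'_1, y'_1, y'_2, x'_2$ gives
\[
\bigl| |x_1 x_2| - |y_1 y_2| \bigr| \le |x'_1 y'_1| + |x'_2 y'_2| < 2r,
\]
so $\dis R \le 2r$. Taking the infimum over $R$ and letting $\e \to 0$ finishes this direction.

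\smallskip

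The step I expect to be the main obstacle is the mixed-case triangle inequality for $|\cdot\,\cdot|_Z$ in the upper-bound construction: one has to combine two \emph{different} $\e$-approximating pairs from $R$ (one per cross-segment) and invoke the distortion bound at those two matched points to absorb the $+r$ summands correctly. Everything else is bookkeeping with correspondences and with the definition of the Hausdorff and Gromov--Hausdorff distances.
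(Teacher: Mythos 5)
The paper states this proposition without proof, citing Burago--Burago--Ivanov, and your argument is precisely the standard proof from that reference: the disjoint-union realization with cross-distance $\inf\bigl\{|xx'|+r+|y'y| : (x',y')\in R\bigr\}$ for the upper bound, and the correspondence assembled from $r$-close points of a near-optimal realization for the lower bound. Both halves are correct --- the only nitpick is that in the upper bound the cross-distances are at least $r$ \emph{exactly}, with no ``controlled correction'' needed, since every term of the infimum contains the summand $r$.
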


\begin{dfn}
A relation $R \in \mathcal{R}(X, Y)$ is called \emph{optimal}, if $d_{GH}(X,Y)$ = $\frac{1}{2}\dis\ R$. The set of all optimal correspondences between $X$ and $Y$ is denoted by $\mathcal{R}_{\opt}(X, Y)$.
\end{dfn}

\begin{prop}[\cite{IvaIliTuz}]\label{3}
For any $X, Y \in \mathcal{M}$ we have $\mathcal{R}_{\opt}(X, Y) \ne \0$.
\end{prop}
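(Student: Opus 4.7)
The plan is to obtain an optimal correspondence as a Hausdorff limit of a minimizing sequence of correspondences in the compact space $X\times Y$. Concretely, by Proposition~\ref{1,7}, pick a sequence $R_n\in\mathcal{R}(X,Y)$ with $\tfrac12\dis R_n\to d_{GH}(X,Y)$. Replacing each $R_n$ by its closure in $X\times Y$ does not change the distortion (since the metrics are continuous, the sup defining $\dis$ is attained on the closure), and the closure of a correspondence is still a correspondence; so we may assume all $R_n$ are closed. Since $X$ and $Y$ are compact, $X\times Y$ is compact, and by the Blaschke-type compactness of $\mathcal{H}(X\times Y)$ in Hausdorff distance, some subsequence $R_{n_k}$ converges in $d_H$ to a closed set $R\subset X\times Y$.

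Next, I would verify that $R$ is a correspondence. Given any $x\in X$, for each $k$ choose $y_k\in Y$ with $(x,y_k)\in R_{n_k}$; by compactness of $Y$, extract a sub-subsequence with $y_k\to y$. Then $(x,y_k)\to(x,y)$ in $X\times Y$, and since $R_{n_k}\to R$ in Hausdorff distance and $R$ is closed, $(x,y)\in R$. This shows $\pi_X(R)=X$; the argument for $\pi_Y(R)=Y$ is symmetric.

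Finally, I would show the lower-semicontinuity inequality $\dis R\le\liminf_k\dis R_{n_k}$. Take any $(x,y),(x',y')\in R$. By Hausdorff convergence, there exist $(x_k,y_k),(x_k',y_k')\in R_{n_k}$ with $(x_k,y_k)\to(x,y)$ and $(x_k',y_k')\to(x',y')$. For each $k$,
$$
\bigl||x_kx_k'|-|y_ky_k'|\bigr|\le\dis R_{n_k}.
$$
Passing to the limit using continuity of the metrics on $X$ and $Y$ gives $\bigl||xx'|-|yy'|\bigr|\le\liminf_k\dis R_{n_k}=2d_{GH}(X,Y)$, hence $\dis R\le 2d_{GH}(X,Y)$. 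The reverse inequality $\dis R\ge 2d_{GH}(X,Y)$ is immediate from Proposition~\ref{1,7}, so $\tfrac12\dis R=d_{GH}(X,Y)$ and $R\in\mathcal{R}_{\opt}(X,Y)$.

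The only non-cosmetic point is ensuring that surjectivity and the distortion bound both survive the Hausdorff limit; both rely crucially on the compactness of $X$ and $Y$ (to guarantee that $X\times Y$ is compact, that $\mathcal{H}(X\times Y)$ is compact, and that the fibers over given points of $X$ or $Y$ have convergent subsequences). Without compactness one could at best get a closed limit set which may fail either surjectivity condition, so the argument is genuinely using the hypothesis $X,Y\in\mathcal{M}$.
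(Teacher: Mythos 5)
Your argument is correct and is essentially the standard compactness proof of this fact: pass to closed correspondences in a minimizing sequence, extract a Hausdorff-convergent subsequence via Blaschke compactness of $\mathcal{H}(X\times Y)$, and check that surjectivity of the projections and the distortion bound survive the limit. The paper itself gives no proof and simply cites \cite{IvaIliTuz}, where the existence of optimal correspondences is established by the same scheme, so there is nothing to object to here.
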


\begin{prop}[\cite{IvaIliTuz}]\label{1,9}
For any $X$, $Y\in \mathcal{M}$ and each $R \in \mathcal{R}_{\opt}(X, Y)$ the family $R_t$, $t\in [0, 1]$, of compact metric spaces such that $R_0=X$, $R_1=Y$, and for $t\in(0, 1)$ the space $R_t$ is equal to $(R, \rho_t)$, where $\rho_t\bigl((x, y),(x', y')\bigr) = (1-t)\bigl| xx'\bigr|+t\bigl|yy'\bigr|$, is a shortest curve in $\mathcal{M}$ connecting $X$ and $Y$.
\end{prop}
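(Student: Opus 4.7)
The strategy is to exhibit $R_t$ concretely as a compact metric space and then show that the assignment $t\mapsto R_t$ is $d_{GH}(X,Y)$-Lipschitz; combined with the trivial lower bound on the length of any curve joining $X$ and $Y$, this will force the curve to be shortest.

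First I would verify that each $R_t$ is a well-defined compact metric space. The function $\rho_t$ is a convex combination of the two pseudometrics on $R$ obtained by pulling back the metrics of $X$ and $Y$ through the canonical projections $\pi_X,\pi_Y$, and hence is itself a pseudometric. For $t\in(0,1)$ the vanishing $\rho_t\bigl((x,y),(x',y')\bigr)=0$ forces both $|xx'|=0$ and $|yy'|=0$, so $\rho_t$ is in fact a metric. Replacing $R$ by its closure in $X\times Y$ (which is again an optimal correspondence with the same distortion) makes $R$ compact; and on $R$ the $\rho_t$-topology coincides with the subspace topology inherited from $X\times Y$ since $\min(1-t,t)\max(|xx'|,|yy'|)\le\rho_t\le\max(|xx'|,|yy'|)+\max(|xx'|,|yy'|)$, so $R_t$ is compact. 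At the endpoints $t=0,1$, the quotients of $R$ by the pseudometrics $\rho_0$ and $\rho_1$ are isometric to $X$ and $Y$ respectively, because $R$ is a correspondence.

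Next I would bound $d_{GH}(R_s,R_t)$ by exhibiting a cheap correspondence. For $s,t\in(0,1)$, the identity relation on $R$ is a correspondence between $R_s$ and $R_t$ whose distortion is
$$\sup\Bigl|\,(1-s)|xx'|+s|yy'|-(1-t)|xx'|-t|yy'|\,\Bigr|\;=\;|t-s|\,\dis R\;=\;2|t-s|\,d_{GH}(X,Y),$$
using optimality of $R$ in the last equality. For $s=0$ the correspondence $\{(x,(x,y)):(x,y)\in R\}\subset X\times R_t$ has distortion $t\,\dis R=2t\,d_{GH}(X,Y)$; the case $s=1$ is symmetric. In all cases we obtain $d_{GH}(R_s,R_t)\le|t-s|\,d_{GH}(X,Y)$, so the map $t\mapsto R_t$ is Lipschitz.

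Finally, for any partition $0=t_0<\cdots<t_n=1$ the sum $\sum_i d_{GH}(R_{t_{i-1}},R_{t_i})$ is at most $d_{GH}(X,Y)$, so the length of the curve $t\mapsto R_t$ is at most $d_{GH}(X,Y)$. Since any curve from $X$ to $Y$ has length at least $d_{GH}(R_0,R_1)=d_{GH}(X,Y)$, the curve has length exactly $d_{GH}(X,Y)$ and is therefore shortest (and, as a byproduct, the estimate is an equality: $d_{GH}(R_s,R_t)=|t-s|\,d_{GH}(X,Y)$). I expect the main technical point to be the first one, namely verifying that $\rho_t$ really produces a compact metric space --- closedness of $R$ and the comparison of $\rho_t$ with the product topology --- while the Lipschitz estimate and the length argument are routine.
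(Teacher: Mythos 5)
The paper states this proposition as a quoted result from the reference \cite{IvaIliTuz} and gives no proof of its own, so there is nothing internal to compare against; your argument is correct and is essentially the standard proof from that reference (linear interpolation of the pulled-back metrics along an optimal correspondence, the identity correspondence giving $d_{GH}(R_s,R_t)\le|t-s|\,d_{GH}(X,Y)$, and the length lower bound forcing equality). Your attention to the technical point that $R$ should be closed (or replaced by its closure) for $(R,\rho_t)$ to be compact is well placed, since the proposition as stated quantifies over all optimal correspondences.
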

For a metric space $X$ by $\diam(X)$ we denote its \emph{diameter}:
$$
\diam(X) = \sup_{x,x' \in X}|xx'|.
$$
Let $\D_1$ be a single-point space.
\begin{ass}[\cite{BurBurIva}]\label{8}
For any metric space $X$ we have $d_{GH}(X,\D_1)$ = $\diam(X)/2.$
\end{ass}
\begin{dfn}

We say that a finite metric space $M$ is \emph{in general position}, or is a \emph{space of general position}, if all its nonzero distances are distinct, and all triangle inequalities are strict.

\end{dfn}
For a metric space $X$ we define the following values:
$$
s(X) = \inf\{|xy| : x \ne y\}, \ e(X) = \inf\Bigl\{\Bigl||xy| - |zw|\Bigr|: x\ne y, z \ne w, \{x, y\} \ne \{z, w\} \Bigr\}.
$$

\begin{prop}[\cite{IvaTuz}]\label{5}
Let $M = \{1,\ldots,n\}$ be a metric space. Then for any $0 < \e \le s(M)/2$ and each $X \in\mathcal{M}$ such that $2d_{GH}(M,X) < \e$, there exists a partition $X = \sqcup_{i=1}^n X_i$ unique up to numeration by points of $M$, possessing the following properties\/\rom:
\begin{enumerate}
\item $\diam X_i < \e$\rom;
\item for any $i, j \in M$ and any $x \in X_i$ and $x' \in X_j$ \rom(here the indices $i$ and $j$ may be equal to each other\/\rom) it holds $\Bigl||xx'| - |ij|\Bigr| < \e$.
\end{enumerate}
\end{prop}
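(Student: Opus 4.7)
My plan is to produce the partition from an optimal correspondence. By Proposition~\ref{3} there exists $R \in \mathcal{R}_{\opt}(M, X)$, and by Proposition~\ref{1,7} we have $\dis R = 2 d_{GH}(M, X) < \e$. For each $i \in M$ I set $X_i := R(i) = \{x \in X : (i, x) \in R\}$. Surjectivity of the canonical projection $\pi_M$ on $R$ ensures every $X_i$ is nonempty, and surjectivity of $\pi_X$ on $R$ gives $\bigcup_i X_i = X$.

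Next I verify the three needed properties. If $x, x' \in X_i$, then $(i,x),(i,x') \in R$, hence $\bigl||xx'|-|ii|\bigr| = |xx'| \le \dis R < \e$, giving (1). For $x \in X_i$ and $x' \in X_j$ (with possibly $i=j$) the same estimate yields $\bigl||xx'|-|ij|\bigr| \le \dis R < \e$, giving (2). Disjointness of the $X_i$'s then follows from (2): if $x \in X_i \cap X_j$ with $i \ne j$, then $|ij| \le \dis R < \e \le s(M)/2$, contradicting $|ij| \ge s(M)$. So $\{X_i\}_{i=1}^n$ is the claimed partition.

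For uniqueness, let $\{Y_k\}_{k=1}^n$ be another partition satisfying (1) and (2). I claim each $X_i$ is contained in a single $Y_k$: if $x, x' \in X_i$ lie in distinct parts $Y_k, Y_l$, then (1) for $X_i$ gives $|xx'| < \e$, while (2) for $\{Y_k\}$ gives $|xx'| > |kl| - \e \ge s(M) - \e \ge \e$ (using $\e \le s(M)/2$), a contradiction. So there is a map $\sigma$ with $X_i \subset Y_{\sigma(i)}$, and symmetrically a map $\tau$ with $Y_k \subset X_{\tau(k)}$. Then $X_i \subset X_{\tau(\sigma(i))}$, and disjointness plus nonemptiness of the parts force $\tau\c\sigma$ and $\sigma\c\tau$ to be the identity, so $\sigma$ is a permutation, establishing uniqueness up to renumbering.

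The whole argument is essentially bookkeeping and I do not anticipate any serious obstacle; the only delicate point is aligning strict with non-strict inequalities. The hypothesis $\e \le s(M)/2$ is used precisely to produce $s(M) - \e \ge \e$ in the uniqueness step, and the strict hypothesis $2d_{GH}(M,X) < \e$ is what keeps (1) and (2) strict, since an optimal correspondence only guarantees $\dis R = 2d_{GH}(M,X)$ and not anything smaller.
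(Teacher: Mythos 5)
Your proof is correct: taking an optimal correspondence $R\in\mathcal{R}_{\opt}(M,X)$, setting $X_i=R(i)$, and using $\dis R=2d_{GH}(M,X)<\e$ together with $\e\le s(M)/2$ for disjointness and uniqueness is exactly the standard argument, and your handling of the strict/non-strict inequalities checks out. Note that the paper itself gives no proof of this proposition --- it is imported from \cite{IvaTuz} --- so there is nothing to compare against, but your argument is the natural one and matches the approach of that reference.
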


\begin{prop}[\cite{IvaTuz}]
Let $M = \{1,\ldots,n\}$ be a metric space. Then for any $0 < \e \le s(M)/2$, any $X \in \mathcal{M}$, $2d_{GH}(M,X) < \e$, and each $R \in \mathcal{R}_{\opt}(M,X)$ the family $\{R(i)\}_{i=1}^n$ is a partition of the set $X$, satisfying the following properties\/\rom:
\begin{enumerate}
\item $\diam X_i < \e$\rom;
\item  for any $i, j \in M$, $x \in R(i), x'\in R(j)$ it holds $\Bigl||xx'| - |ij|\Bigr| < \e$.
\end{enumerate}
Moreover, if $R'$is another optimal correspondence between $M$ and $X$, then the partitions $\{R(i)\}_{i=1}^n$ and $\{R'(i)\}_{i=1}^n$ may differ from each other only by numerations generated by the correspondences $i \mapsto R(i)$ and $i \mapsto R'(i)$.
\end{prop}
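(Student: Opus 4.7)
The plan is to squeeze everything out of the single inequality $\dis R = 2\,d_{GH}(M,X) < \e$ combined with the hypothesis $\e \le s(M)/2$, i.e.\ $s(M) \ge 2\e$. The effect is that any two distinct points of $M$ lie at distance strictly greater than twice the distortion, leaving a gap that will separate ``same block'' pairs from ``different block'' pairs; this single quantitative fact will drive the existence of the partition, the two inequalities, and the uniqueness.

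First I would check that $\{R(i)\}_{i=1}^n$ is a partition of $X$. Nonemptiness and covering $X$ follow immediately from $R$ being a correspondence (surjectivity of the two canonical projections). For disjointness: were some $x\in R(i)\cap R(j)$ with $i\ne j$, the pairs $(i,x),(j,x)\in R$ would give $|ij| = \bigl||ij|-|xx|\bigr| \le \dis R < \e \le s(M)/2$, contradicting $|ij|\ge s(M)$. Properties (1) and (2) then fall out in one line each from the definition of distortion: for $x,x'\in R(i)$, $|xx'| = \bigl||xx'|-|ii|\bigr| \le \dis R < \e$, hence $\diam R(i)<\e$; and for $x\in R(i)$, $x'\in R(j)$, $\bigl||xx'|-|ij|\bigr| \le \dis R < \e$.

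For the uniqueness-up-to-renumeration part I would argue as follows. Let $R'$ be another element of $\mathcal{R}_{\opt}(M,X)$; the same reasoning applied to $R'$ delivers its own partition $\{R'(j)\}_{j=1}^n$ satisfying (1) and (2). Fix $i\in M$, take any $x\in R(i)$, and let $j$ be the unique index with $x\in R'(j)$. For any second point $x'\in R(i)$, property (1) for $R$ gives $|xx'|<\e$; on the other hand if $x'$ lay in $R'(j')$ with $j'\ne j$, property (2) for $R'$ would force $|xx'| > |jj'|-\e \ge s(M)-\e \ge \e$, a contradiction. Hence $R(i)\ss R'(j)$, and this defines a map $\v\: M\to M$ by $\v(i)=j$. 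Swapping the roles of $R$ and $R'$ yields an analogous map $\psi\: M\to M$ with $R'(j)\ss R(\psi(j))$; because $\{R(i)\}$ is a partition of $X$, the chain $R(i)\ss R'(\v(i))\ss R(\psi(\v(i)))$ forces $\psi\c\v=\mathrm{id}$, and symmetrically $\v\c\psi=\mathrm{id}$. Thus $\v$ is a bijection and $R(i) = R'(\v(i))$, which is precisely the renumeration claimed.

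I expect no genuine obstacle: the proof is almost entirely finite bookkeeping with strict inequalities. The only spot that needs care is verifying that the margin $s(M)-2\dis R>0$ actually separates things in both the disjointness step and the uniqueness step, and this is already guaranteed by $\dis R<\e\le s(M)/2$. No compactness, topology, or Hausdorff-geometric input is needed beyond Proposition~\ref{1,7} (the distortion formula for $d_{GH}$).
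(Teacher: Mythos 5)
Your argument is correct and complete. Note, however, that the paper itself offers no proof of this proposition: it is imported verbatim from the cited reference \cite{IvaTuz}, so there is no in-paper proof to compare against. Judged on its own, your write-up does exactly what is needed: optimality gives $\dis R = 2d_{GH}(M,X) < \e$, the bound $s(M)\ge 2\e$ then separates the diagonal estimates ($|ij|\le\dis R$ would contradict $|ij|\ge s(M)$, giving disjointness) from the off-diagonal ones, and properties (1) and (2) are immediate from the definition of distortion; the uniqueness step, where a point $x'\in R(i)$ landing in a different block $R'(j')$ would have to satisfy both $|xx'|<\e$ and $|xx'|>s(M)-\e\ge\e$, is exactly the right use of the same gap, and the two inclusions $R(i)\ss R'(\v(i))$ and $R'(j)\ss R(\psi(j))$ combined with disjointness do force $\v$ to be a bijection with $R(i)=R'(\v(i))$. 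The only remark worth making is structural: the cited source obtains this statement by identifying $\{R(i)\}_{i=1}^n$ with the canonical partition already produced in the preceding proposition (Proposition~\ref{5} here), whereas you reprove everything directly from the distortion inequality; your route is self-contained and arguably cleaner, at the cost of not explaining why $\{R(i)\}$ coincides with that canonical partition --- a one-line addendum (each $R(i)$ meets, hence by the diameter bounds and the gap $s(M)-2\e\ge 0$ is contained in, a unique block $X_j$) would tie the two statements together.
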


\begin{dfn}[\cite{IvaTuz}]
The family $\{X_i\}$ from Proposition \ref{5} we call the \emph{canonical partition} of the space $X$ with respect to $M$.
\end{dfn}

\begin{prop}[\cite{IvaTuz}]\label{6}
Let $M = \{1,\ldots,n\}$ be a metric space, $n \ge 3$, $e(M) > 0$. Choose an arbitrary $0 < \e \le \frac{1}{4}\min\{s(M), e(M)\}$, any $X, Y \in \mathcal{M}$, $2d_{GH}(M,X) < \e$, $2d_{GH}(M, Y ) < \e$, and let $\{X_i\}$ and $\{Y_i\}$ denote the canonical partitions of $X$ and $Y$, respectively, w.r.t. $M$. Then for each $R \in \mathcal{R}_{\opt}(X, Y )$ there exist $R_i \in R(X_i, Y_i)$ such that $R = \sqcup_{i=1}^n R_i$.

\end{prop}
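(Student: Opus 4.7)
The plan is to show that any optimal correspondence $R \in \mathcal{R}_{\opt}(X, Y)$ must respect the canonical partitions: namely, there is a bijection $\phi$ of $\{1,\ldots,n\}$, which will turn out to be an isometry of $M$, such that $R(X_i) \subseteq Y_{\phi(i)}$ for every $i$. Once this is established, a relabeling of $\{Y_i\}$ by $\phi^{-1}$, permitted by Proposition \ref{5} up to numeration, lets us set $R_i = R \cap (X_i \times Y_i)$ and read off the decomposition.

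First, the triangle inequality in $(\mathcal{M}, d_{GH})$ together with the hypotheses gives $d_{GH}(X, Y) < \e$, so by Proposition \ref{1,7} any optimal $R$ satisfies $\dis R < 2\e$. Suppose $(x, y), (x', y') \in R$ with $x, x' \in X_i$; property (1) of Proposition \ref{5} gives $|xx'| < \e$, whence $|yy'| < 3\e$. If we had $y \in Y_{i'}$, $y' \in Y_{j'}$ with $i' \ne j'$, then property (2) would force $|yy'| > |i'j'| - \e \ge s(M) - \e \ge 3\e$, a contradiction. Hence all of $R(X_i)$ lies in a single partition class $Y_{\phi(i)}$; a symmetric argument gives $R^{-1}(Y_j) \subset X_{\psi(j)}$, and since $X_i \subset R^{-1}R(X_i) \subset X_{\psi(\phi(i))}$, the disjointness of canonical partitions forces $\phi, \psi$ to be mutually inverse bijections of $\{1, \ldots, n\}$.

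Next, I would use the hypothesis $e(M) > 0$ to show $\phi$ is an isometry of $M$. For $i \ne j$, picking $(x, y), (x', y') \in R$ with $x \in X_i$, $x' \in X_j$, $y \in Y_{\phi(i)}$, $y' \in Y_{\phi(j)}$ and combining $||xx'| - |ij|| < \e$, $||yy'| - |\phi(i)\phi(j)|| < \e$, and $||xx'| - |yy'|| < 2\e$ gives $||ij| - |\phi(i)\phi(j)|| < 4\e \le e(M)$. By the definition of $e(M)$ this rules out $\{i, j\} \ne \{\phi(i), \phi(j)\}$ and $|ij| \ne |\phi(i)\phi(j)|$ holding simultaneously, forcing $|ij| = |\phi(i)\phi(j)|$ in every case. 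Thus $\phi$ is exactly the kind of permutation that realizes the freedom in the "unique up to numeration" clause of Proposition \ref{5}, so renumbering $Y_i \mapsto Y_{\phi(i)}$ yields another valid numeration of the canonical partition of $Y$, with respect to which $R(X_i) \subseteq Y_i$ and $R^{-1}(Y_i) \subseteq X_i$ for every $i$.

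Setting $R_i := R \cap (X_i \times Y_i)$, the preceding inclusions show $R \subseteq \bigsqcup_i (X_i \times Y_i)$, so $R = \bigsqcup_{i=1}^n R_i$ with the union pairwise disjoint. Surjectivity of the projections $\pi_{X_i}\colon R_i \to X_i$ and $\pi_{Y_i}\colon R_i \to Y_i$ is inherited from $R$: for each $x \in X_i$, any $y$ with $(x, y) \in R$ lies in $Y_i$, so $(x, y) \in R_i$, and symmetrically for $Y_i$. Hence $R_i \in \mathcal{R}(X_i, Y_i)$. The step I expect to be the most delicate is the promotion of $\phi$ from a mere bijection to an isometry of $M$, because this is precisely what authorizes the renumeration against the "up to numeration" clause; without the $e(M) > 0$ hypothesis one could produce $\phi$ but not justify the relabeling that turns $R(X_i) \subseteq Y_{\phi(i)}$ into $R(X_i) \subseteq Y_i$.
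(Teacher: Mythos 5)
The paper does not prove this proposition; it is imported from \cite{IvaTuz} without argument, so there is no in-paper proof to compare against. Your argument is correct and is essentially the standard one from the cited reference: $\dis R<2\e$ together with $\diam X_i<\e$ and $|i'j'|\ge s(M)\ge 4\e$ forces $R(X_i)$ into a single block $Y_{\phi(i)}$, the symmetric containments make $\phi$ a bijection, and the $e(M)\ge 4\e$ estimate pins down $\phi$. One simplifying remark on the step you flag as most delicate: what the $e(M)$ estimate actually gives you is $\{i,j\}=\{\phi(i),\phi(j)\}$ for every pair $i\ne j$ (since $\{i,j\}\ne\{\phi(i),\phi(j)\}$ would force $\bigl||ij|-|\phi(i)\phi(j)|\bigr|\ge e(M)\ge 4\e$, contradicting your bound $<4\e$), and for $n\ge 3$ this already forces $\phi=\mathrm{id}$. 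So no renumeration of $\{Y_i\}$ is needed at all: $R(X_i)\subseteq Y_i$ holds for the canonical numerations themselves, and the concluding construction $R_i=R\cap(X_i\times Y_i)$ goes through directly.
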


\section{The main results}
\markright{\thesection.~The main results}

\begin{thm}
A ball with center at the one-point metric space is convex in the weak sense.
\end{thm}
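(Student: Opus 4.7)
The plan is to use the explicit description of shortest curves provided by Proposition \ref{1,9} together with the identification of balls around $\Delta_1$ given by Assertion \ref{8}. By Assertion \ref{8}, we have $d_{GH}(X,\D_1) = \diam(X)/2$, so the ball $B_r(\D_1)$ is precisely the set of compact metric spaces whose diameter is at most $2r$. Thus the goal reduces to exhibiting, for any $X, Y$ with $\diam X, \diam Y \le 2r$, a shortest curve in $\cM$ from $X$ to $Y$ all of whose intermediate spaces have diameter at most $2r$.

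The natural candidate is the straight-line curve of Proposition \ref{1,9}. Namely, by Proposition \ref{3} there exists some $R \in \cR_{\opt}(X,Y)$, and then the family $R_t = (R, \r_t)$ with $\r_t\bigl((x,y),(x',y')\bigr) = (1-t)|xx'| + t|yy'|$ is a shortest curve from $X$ to $Y$. The key observation is that the diameter behaves convexly along this curve: for any two pairs $(x,y), (x',y') \in R$,
\[
\r_t\bigl((x,y),(x',y')\bigr) \le (1-t)\diam X + t\,\diam Y \le (1-t)\cdot 2r + t\cdot 2r = 2r.
\]
Taking the supremum over pairs in $R$ yields $\diam R_t \le 2r$, hence $d_{GH}(R_t,\D_1) \le r$ by Assertion \ref{8}, so $R_t \in B_r(\D_1)$ for all $t \in [0,1]$.

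There is essentially no obstacle here, since the whole argument is a convexity estimate hidden inside the linear interpolation formula $\r_t$. The only point worth double-checking is that Proposition \ref{1,9} genuinely produces a curve in $\cM$ (i.e.\ that each $R_t$ is compact), but this is part of the cited statement. So the proof will consist of: (i) invoke Assertion \ref{8} to translate the statement into a diameter bound, (ii) pick an optimal correspondence via Proposition \ref{3} and form the shortest curve of Proposition \ref{1,9}, and (iii) carry out the one-line diameter estimate above.
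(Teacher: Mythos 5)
Your proposal is correct and follows essentially the same route as the paper: identify $B_r(\D_1)$ with the set of spaces of diameter at most $2r$ via Assertion \ref{8}, take the shortest curve $R_t$ of Proposition \ref{1,9} built from an optimal correspondence, and bound $\diam R_t$ by the linear interpolation estimate. The only cosmetic difference is that the paper bounds $\diam R_t$ by $\max(\diam X,\diam Y)$ while you bound it by $(1-t)\diam X + t\,\diam Y$; both give the needed inequality.
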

\begin{proof}
Let $B=B_r(\D_1)$ be a closed ball with center at $\D_1$, where $r>0$, and $X,\;Y \in B$. By Proposition \ref{8},
$$
d_{GH}(\D_1,X) = \diam(X)/2\le r, \ d_{GH}(\D_1,Y) = \diam(Y)/2\le r.
$$
We choose some correspondence $R\in\mathcal{R}_{\opt}(X, Y)$ which exists by Proposition \ref{3}. We construct the space $R_t =(R, \rho_t)$ with metric $\rho_t\bigl((x, y),(x', y')\bigr) = (1-t)\bigl| xx'\bigr|+t\bigl|yy'\bigr|$, where $t\in(0, 1)$, and put $R_0=X$, $R_1=Y$. Then, by Proposition~\ref{1,9}, the curve $R_t$, $t\in[0,1]$, connecting $X$ and $Y$ is shortest.
\\We show that the curve $R_t$ lies in the ball $B$. To do that, we estimate the Gromov--Hausdorff distance between the center $\D_1$ and the space $R_t$. We have
$$
d_{GH}(\D_1, R_t) = \diam(R_t)/2. \;
$$
For any $x,x'\in X$ and $y,y'\in Y$ it holds
$$
|xx'| \le \diam X \le \max(\diam X, \diam Y); \; |yy'| \le \diam Y\le \max(\diam X, \diam Y).
$$
Therefore,
$$
\diam(R_t) = \max|(x, y) (x', y')|_{\rho_t} = \max \bigl((1-t)|xx'|+t|yy'|\bigr) \le
$$
$$
\le (1-t)\max(\diam X, \diam Y)+t\max(\diam X, \diam Y) = \max(\diam X, \diam Y),
$$
\\
thus, $d_{GH}(\D_1,R_t)\le \frac1 2 \max(\diam X, \diam Y)=\max\bigl(d_{GH}(\D_1,X),d_{GH}(\D_1,Y)\bigr)\le r$.
\end{proof}
\
\begin{thm}
A ball with center at the one-point metric space is not convex in the strong sense.
\end{thm}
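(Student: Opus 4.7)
The idea is to exhibit, for any $r > 0$, two points $X, Y \in B_r(\D_1)$ together with an intermediate space $Z \notin B_r(\D_1)$ satisfying $d_{GH}(X, Z) + d_{GH}(Z, Y) = d_{GH}(X, Y)$. Once such a $Z$ is found, the strictly intrinsic property of $d_{GH}$ on $\cM$ yields shortest curves from $X$ to $Z$ and from $Z$ to $Y$, whose concatenation is a shortest curve from $X$ to $Y$ passing through $Z$ and thereby leaving the ball, which witnesses the failure of strong convexity.

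For the endpoints I take $X = \{x_1, x_2\}$ with $|x_1 x_2| = 2r$, and $Y$ the equilateral triangle $\{y_1, y_2, y_3\}$ with all pairwise distances equal to $2r$. By Assertion~\ref{8} both have Gromov--Hausdorff distance exactly $r$ from $\D_1$, so they lie in $B_r(\D_1)$. I would first verify $d_{GH}(X, Y) = r$: the upper bound comes from the correspondence $\{(x_1, y_1), (x_2, y_2), (x_2, y_3)\}$, whose distortion equals $2r$ and is attained on the pair $\bigl((x_2, y_2), (x_2, y_3)\bigr)$; the lower bound follows by cardinality, since every $R \in \mathcal{R}(X, Y)$ must contain two pairs $(x_i, y_j), (x_i, y_k)$ with $j \ne k$, forcing $\dis R \ge |y_j y_k| = 2r$.

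Next I introduce $Z = \{z_1, z_2, z_3\}$ with $|z_1 z_2| = r$ and $|z_1 z_3| = |z_2 z_3| = 3r$; the triangle inequalities hold trivially, and $\diam Z = 3r > 2r$ shows $Z \notin B_r(\D_1)$. Using the correspondences $\{(x_1, z_3), (x_2, z_1), (x_2, z_2)\}$ and $\{(y_1, z_1), (y_2, z_2), (y_3, z_3)\}$ I get distortion $r$ in each case, giving $d_{GH}(X, Z) \le r/2$ and $d_{GH}(Y, Z) \le r/2$. The matching lower bounds come from the elementary inequality $d_{GH}(A, B) \ge \tfrac{1}{2}|\diam A - \diam B|$, which yields $r/2$ in both cases since $\diam Z - \diam X = \diam Z - \diam Y = r$. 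Therefore $d_{GH}(X, Z) + d_{GH}(Z, Y) = r = d_{GH}(X, Y)$, and concatenating shortest curves from $X$ to $Z$ and from $Z$ to $Y$ (available because $\cM$ is strictly intrinsic) produces a shortest curve from $X$ to $Y$ that contains $Z \notin B_r(\D_1)$.

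The main bookkeeping lies in the distortion computations and, most crucially, in proving the lower bound $d_{GH}(X, Y) \ge r$: this is what guarantees that the concatenation at $Z$ is genuinely globally shortest rather than a mere detour, and is the one step where a careless cardinality argument could fail.
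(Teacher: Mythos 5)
Your argument is correct, but it takes a genuinely different route from the paper. The paper chooses $A=[0,2r]$ and $B=\{0,2r\}$ as subsets of $\R$, checks that $d_{GH}(A,B)=d_H(A,B)=r$, and then imports the hyperspace machinery: the family $C_t(A,B)=B_t(A)\cap B_{r-t}(B)$ is a Hausdorff geodesic by Corollary~\ref{7}, the equality of the two distances forces it to be a Gromov--Hausdorff geodesic as well, and the midpoint $C_{r/2}(A,B)$ has diameter $3r$, so it leaves the ball. You instead stay entirely within finite metric spaces and manufacture the offending shortest curve as a concatenation through an explicit three-point space $Z$ of diameter $3r$, verifying the additivity $d_{GH}(X,Z)+d_{GH}(Z,Y)=d_{GH}(X,Y)$ by direct distortion computations; all of your computations check out (the pigeonhole lower bound $\dis R\ge 2r$ for every $R\in\mathcal{R}(X,Y)$, and the distortions $r$ of your two explicit correspondences with $Z$). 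Your route is more elementary in that it needs only Proposition~\ref{1,7}, the existence of geodesics (Propositions~\ref{3} and~\ref{1,9}), and Assertion~\ref{8}, and avoids $\mathcal{H}(\R)$ and the sets $C_s(A,B)$ altogether; the paper's route buys an explicit parametrization of the entire bad geodesic rather than just one point on it. Two minor remarks: (i) the individual lower bounds $d_{GH}(X,Z)\ge r/2$ and $d_{GH}(Y,Z)\ge r/2$ are not actually needed --- once $d_{GH}(X,Y)=r$ is established, the triangle inequality gives $r=d_{GH}(X,Y)\le d_{GH}(X,Z)+d_{GH}(Z,Y)\le r/2+r/2=r$, forcing equality; (ii) the inequality $d_{GH}(A,B)\ge\frac12\bigl|\diam A-\diam B\bigr|$ that you invoke, though standard, can be obtained purely from the paper's toolkit as the triangle inequality applied to $\D_1$ together with Assertion~\ref{8}. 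You are right that the lower bound $d_{GH}(X,Y)\ge r$ is the step on which everything hinges, and your cardinality argument for it is sound.
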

\begin{proof}
To prove that, we construct a shortest curve connecting some spaces $A, B \in B_r(\Delta_1)\subset\cM$, but not containig in $B_r(\D_1)$. Let $A = [0, 2r]\subset\R$ and $B = \{0, 2r\}\subset\R$. We choose some correspondence $R\in\mathcal{R}_{\opt}(B, A)$ (it exists by Proposition \ref{3}) and estimate it:
\begin{multline*}
\dis R = \sup \Bigr\{|aa'|: a, a'\in R(0); \ |aa'|: a, a'\in R(2r); \ \bigl|2r - |aa'|\bigr|: a\in R(0), a'\in R(2r)\Bigr\}=
\end{multline*}
$$
 =\sup\Bigr\{\diam R(0),\diam R(2r),\bigl|2r - |aa'|\bigr|: a\in R(0), a'\in R(2r) \Bigr\}\le 2r.
$$
1) If $R(0)\cap R(2r) \ne \emptyset $, then choosing $a=a'\in R(0)\cap R(2r)$, we have $\dis R = 2r$.
\\ 2) If $R(0)\cap R(2r) = \emptyset $, then for any $\e>0$ there exist $a\in R(0),\; a'\in R(2r)$, such that $|aa'|<\e$, thus $\dis R = 2r$.
\\Then, by Proposition \ref{1,7}, we have $d_{GH}(A, B) = r$.
Since $d_H(A,B)=r$, it holds $d_{GH}(A,B)=d_H(A,B)$. For $t\in[0,r]$ we put $\g(t)=C_t(A,B)=B_t(A)\cap B_{r-t}(B)$. Applying the corollary \ref{7}, we see, that $\g(t)$ is a shortest curve in $\mathcal{H}(\R)$.
\\For any partition $t_0=0<t_1<\cdots<t_n=r$ of the segment $[0,r]$ we have
$$d_{GH}(A,B)\le\sum_{i=1}^nd_{GH}\bigr(\g(t_{i-1}),\g(t_i)\bigr)\le\sum_{i=1}^nd_H \bigr(\g(t_{i-1}),\g(t_i)\bigr)=d_H(A,B)=d_{GH}(A,B),$$
so
$$d_{GH}(A,B)=\sum_{i=1}^nd_{GH} \bigr(\g(t_{i-1}),\g(t_i)\bigr).$$
Since the length of the curve $\g$ is equal to the supremum of the sums $\sum_{i=1}^nd_{GH}\bigr(\g(t_{i-1}),\g(t_i)\bigr)$ over all possible partitions of the segment $[0,r]$, and all these sums are the same and equal $d_{GH}(A,B)$, then the length of the curve $\g$ is equal to $d_{GH}(A,B)$, therefore $\g$ is a shortest curve.
\\We show that this curve does not lie entirely in the ball $B_r(\Delta_1)$. To do that we calculate $d_{GH}\bigr(C_t(A,B),\D_1\bigr)$. By Assertion \ref{8}, $d_{GH}\bigr(C_t(A,B),\D_1\bigr) = \frac{\diam\bigr(C_t(A,B)\bigr)}{2}$. Notice that for $t=\frac{r}{2}$ we have $\diam\bigr(C_{\frac{r}{2}}(A,B)\bigr)= 3r$, so $d_{GH}\bigr(C_{r/2}(A,B),\Delta_1\bigr)=3r/2>r$, thus $\g(r/2)\not\in B_r(\Delta_1)$.
\end{proof}

\begin{thm}
For any space $M \in \mathcal{M}$ in general position and any $0 < r \le \frac{1}{4}\min\{s(M), e(M)\}$ the ball with center at $M$ and radius $r$ is convex in the weak sense.
\end{thm}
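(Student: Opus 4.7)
The plan is to show that the shortest curve supplied by Proposition~\ref{1,9} stays inside the ball. Given $X, Y \in B_r(M)$, I would choose $R \in \mathcal{R}_{\opt}(X, Y)$ (available by Proposition~\ref{3}) and consider the family $R_t = (R, \rho_t)$, which is a shortest curve from $X$ to $Y$. It then suffices to prove $d_{GH}(M, R_t) \le r$ for every $t \in [0, 1]$, and the natural way to do so is to exhibit a correspondence between $M$ and $R_t$ of distortion at most $2r$.

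Since $2\,d_{GH}(M, X) \le 2r \le \tfrac{1}{2}\min\{s(M), e(M)\}$, and likewise for $Y$, Propositions~\ref{5} and~\ref{6} apply, yielding canonical partitions $X = \sqcup_{i=1}^n X_i$ and $Y = \sqcup_{i=1}^n Y_i$ together with a compatible decomposition $R = \sqcup_{i=1}^n R_i$ with $R_i \in \mathcal{R}(X_i, Y_i)$. I then set
$$
S = \bigl\{\bigl(i, (x,y)\bigr) : i \in \{1, \dots, n\},\ (x, y) \in R_i\bigr\} \subset M \times R_t.
$$
Every $R_i$ is nonempty and the $R_i$ together exhaust $R$, so both canonical projections of $S$ are surjective and $S \in \mathcal{R}(M, R_t)$.

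The heart of the argument is the distortion estimate. For any $\bigl(i, (x, y)\bigr), \bigl(j, (x', y')\bigr) \in S$ we have $x \in X_i$, $x' \in X_j$, $y \in Y_i$, $y' \in Y_j$. Letting $\varepsilon$ approach $2\,d_{GH}(M, X)$ in item~(2) of Proposition~\ref{5}, and using item~(1) combined with $|ii| = 0$ when $i = j$, I obtain $\bigl||xx'| - |ij|\bigr| \le 2r$; the analogous estimate for $Y$ gives $\bigl||yy'| - |ij|\bigr| \le 2r$. Writing $\rho_t\bigl((x,y),(x',y')\bigr) - |ij|$ as the convex combination $(1-t)\bigl(|xx'|-|ij|\bigr) + t\bigl(|yy'|-|ij|\bigr)$ and applying the triangle inequality delivers
$$
\bigl|\rho_t\bigl((x,y),(x',y')\bigr) - |ij|\bigr| \le 2r,
$$
so $\dis S \le 2r$ and hence $d_{GH}(M, R_t) \le r$ by Proposition~\ref{1,7}, which is what is needed. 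The main subtlety I foresee is in the boundary case $d_{GH}(M, X) = r$, where the strict inequality hypothesis of Proposition~\ref{5} just barely fails and one must pass to the limit in $\varepsilon$, together with the degenerate small-$n$ cases: $n = 1$ reduces to Theorem~1, while $n = 2$, for which $e(M)$ has no natural finite value and Proposition~\ref{6} is not directly available, requires a direct block-splitting argument verifying that any optimal $R$ respects the two-block canonical partitions.
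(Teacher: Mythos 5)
Your proposal is correct and follows essentially the same route as the paper's own proof: an optimal correspondence $R$, the shortest curve $R_t$ from Proposition~\ref{1,9}, the canonical partitions and the block decomposition $R=\sqcup_{i=1}^n R_i$ from Propositions~\ref{5} and~\ref{6}, the correspondence $\sqcup_{i=1}^n\{i\}\times R_i$ between $M$ and $R_t$, and the convex-combination distortion bound $2r$. The boundary caveats you flag (the strict inequality $2d_{GH}(M,X)<\e$ on the closed ball's boundary and the small-$n$ cases excluded by Proposition~\ref{6}) are genuine but are passed over silently in the paper's proof as well, so your version is if anything slightly more careful.
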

\begin{proof}
Let $\varepsilon=2r$, $M = \{1,\ldots,n\}$, and $X, \ Y \in B_{\e/2}(M)$. By Proposition \ref{5}, there exist unique (up to a numeration of points of $M$) partitions $X = \sqcup_{i=1}^n X_i$ and $Y = \sqcup_{i=1}^n Y_i$, possessing the following properties: for any $x_i\in X_i, x_j\in X_j, y_i\in Y_i, y_j\in Y_j $ it holds $\Bigl||x_i x_j| - |ij|\Bigr| < \e$ and $\Bigl||y_i y_j| - |ij|\Bigr| < \e$. By Proposition \ref{6}, for any $R \in \mathcal{R}_{\opt}(X, Y)$ there are $R_i \in R(X_i, Y_i)$, where $R = \sqcup_{i=1}^n R_i$. We choose some correspondence $R\in\mathcal{R}_{\opt}(X, Y)$ (it exists by Proposition \ref{3}, it exists). \\We construct a shortest curve $R_t$, as in Proposition \ref{1,9}. To prove convexity in the weak sense, we show that $d_{GH}(M,R_t)\le {\e/2}$. Let us introduce a the correspondence $R'\in \mathcal{R}(M,R_t)$ us $R'=\sqcup_{i=1}^n\{i\}\x R_i$. We have
\begin{multline*}d_{GH}(M,R_t)\le\frac{1}{2}\dis R' = \frac{1}{2}\sup\Bigl\{\bigl| |ij| - |p_i p_j|_t \bigr|: i, j \in M, \ (i,p_i),\,(j,p_j)\in R'\Bigr\}=
\end{multline*}
$$ =\frac{1}{2}\sup\Bigl\{\bigl| |ij| - (1-t)|x_i x_j|-t|y_i y_j| \bigr|:\ i,j\in M,\ (x_i,y_i)=p_i\in R_i,
(x_j,y_j)=p_j\in R_j\Bigr\}=$$
$$ =\frac{1}{2}\sup\Bigl\{\bigl| (1 - t) |ij|+t|ij| - (1-t)|x_i x_j| - t|y_i y_j| \bigr|\Bigr\}= $$
$$=\frac{1}{2}\sup\Bigl\{\bigl| (1 - t)(|ij| - |x_i x_j|) +  t(|ij| - |y_i y_j|) \bigr|\Bigr\}\le$$
$$ \le\frac{1}{2}(1-t)\sup\Bigl\{\bigl||ij| - |x_i x_j|\bigr|\Bigr\} + \frac{1}{2}t\sup\Bigl\{\bigl||ij| - |y_i y_j|\bigr|\Bigr\}\le \frac{1}{2}(1-t)\e + \frac{1}{2}t\e = \frac{\e}{2}.$$
\end{proof}

\end{document}